\definecolor{darkred}{RGB}{139,0,0}
\definecolor{darkgreen}{RGB}{0,100,0}
\definecolor{darkmagenta}{RGB}{139,0,139}
\definecolor{darkpurple}{RGB}{110,0,180}
\definecolor{darkblue}{RGB}{40,0,200}
\definecolor{darkorange}{RGB}{255,140,0}
\newcommand{\bsx}{\boldsymbol{x}}
\newcommand{\bsh}{\boldsymbol{h}}
\newcommand{\bsa}{\boldsymbol{a}}
\newcommand{\bsb}{\boldsymbol{b}}
\newcommand{\bszero}{\boldsymbol{0}}
\newcommand{\bsg}{\boldsymbol{g}}
\newcommand{\bsv}{\boldsymbol{v}}
\newcommand{\bsy}{\boldsymbol{y}}
\newcommand{\bsw}{\boldsymbol{w}}
\newcommand{\rd}{\,{\rm d}}
\newcommand{\NN}{\mathbb{N}}
\newcommand{\ZZ}{\mathbb{Z}}
\newcommand{\CC}{\mathbb{C}}
\newcommand{\cP}{\mathcal{P}}
\newcommand{\cH}{\mathcal{H}}
\newcommand{\icomp}{\mathtt{i}}
\newtheorem{theorem}{Theorem}
\newtheorem{proposition}[theorem]{Proposition}
\newtheorem{remark}[theorem]{Remark}
\newtheorem{lemma}[theorem]{Lemma}
\begin{document}

\title{A note on  Korobov lattice rules for integration of analytic functions}

\author{Friedrich Pillichshammer\thanks{F. Pillichshammer is supported by the Austrian Science Fund (FWF) Project F5509-N26, which is a part of the Special Research Program ``Quasi-Monte Carlo Methods: Theory and Applications''.}
}

\date{}

\maketitle

\begin{abstract}
We study numerical integration for a weighted Korobov space of analytic periodic functions for which the Fourier coefficients decay exponentially fast. In particular, we are interested in how the error depends on the dimension $d$. Many recent papers deal with this problem or similar problems and provide matching necessary and  sufficient conditions for various notions of tractability. In most cases even simple algorithms are known which allow to achieve these notions of tractability. However, there is a gap in the literature: while for the notion of exponential-weak tractability one knows matching necessary and sufficient conditions, so far no explicit algorithm has been known which yields the desired result. 

In this paper we close this gap and prove that Korobov lattice rules are suitable algorithms in order to achieve exponential-weak tractability for integration in weighted Korobov spaces of analytic periodic functions. 
\end{abstract}

\centerline{\begin{minipage}[hc]{130mm}{
{\em Keywords:} numerical integration, lattice rules,  tractability\\
{\em MSC 2010:} 11K45, 65D30}
\end{minipage}}

\section{Introduction}

Many recent papers study numerical integration and approximation for suitably weighted  Korobov spaces, cosine spaces or Hermite spaces of analytic functions (see, for example, \cite{DKPW14,DLPW11,IKP18,IKPW16a,IKPW16b,KPW14a,KPW14b,LX16,W19,XX16,X20}). It is well known that for these problems exponential error convergence rates can be achieved and, using suitably chosen weights, even uniform exponential convergence rates. For problems with very high dimension it is of utmost importance to know also the dependence of the error bounds on the dimension. This question is the subject of tractability (see \cite{NW1,NW2,NW3}). In the context of exponential convergence rates it turned out to be reasonable to study notions of so-called EXP-tractability, where the prefix ``EXP'' stands for ``exponential'' and where, compared to the standard algebraic (ALG-) tractability notions, $\log \varepsilon^{-1}$ takes the role of $\varepsilon^{-1}$ in the requested bounds on the information complexity. For most problems studied one knows ``if and only if'' conditions on the implied weights which guarantee the respective notions of tractability and in most cases one even knows simple algorithms which allow to achieve these notions of EXP-tractability. These algorithms are based on regular grids of different mesh-sizes. 

While for one of the mildest notions of EXP-tractability, namely EXP-weak tractability, one knows matching necessary and sufficient conditions on the weights, just for this case no explicit algorithm is known which leads to the desired result. The corresponding proof of sufficient conditions for achieving EXP-weak tractability requires a detour to $L_2$-approximation and heavy machinery from tractability theory in combination with thorough estimates of the eigenvalues of a suitable operator, see \cite[Section~9]{DKPW14}.

In this short note we consider integration in Korobov spaces and show that actually (Korobov) lattice rules are suitable algorithms to achieve EXP-weak tractability. As a by-product also ``constructive" proofs for the standard ALG-tractability notions can be obtained, even with an improvement of the currently known bound on the  $\varepsilon$-exponent of ALG-strong polynomial tractability. 

\section{Basic definitions and preliminaries}    

\paragraph{The function space.} As in \cite{DKPW14,KPW14b} let $$\bsa=(a_j)_{j \ge 1} \quad \mbox{ and }\quad \bsb=(b_j)_{j \ge 1}$$ be two sequences of positive reals (the weights) for which we assume that we have 
\begin{equation}\label{cond:weightab}
0<a_1 \le a_2 \le \ldots \quad \mbox{ and } \quad b_{\ast}=\inf b_j >0.
\end{equation}
We also write $a_\ast:=\inf a_j =a_1$. Now we fix $\omega \in (0,1)$ and put $$\rho_{\bsa,\bsb}(\bsh):=\omega^{\sum_{j=1}^d a_j |h_j|^{b_j}}\quad \mbox{ for $\bsh=(h_1,\ldots,h_d)\in \ZZ^d$.}$$ Define the reproducing kernel $K_{d,\bsa,\bsb}:[0,1]^d \times [0,1]^d \rightarrow \CC$ by 
\begin{equation}\label{def:kerkorgen}
K_{d,\bsa,\bsb}(\bsx,\bsy)=\sum_{\bsh \in \ZZ^d} \rho_{\bsa,\bsb}(\bsh) \exp(2 \pi \icomp \bsh \cdot (\bsx-\bsy)) \quad \mbox{for $\bsx, \bsy \in [0,1]^d$.}
\end{equation}
It is easy to see that $K_{d,\bsa,\bsb}$ is conjugate symmetric and positive semi-definite and therefore indeed a reproducing kernel.

We denote the corresponding reproducing kernel Hilbert space with kernel $K_{d,\bsa,\bsb}$ by $\cH_{d,\bsa,\bsb}$. Functions $f \in \cH_{d,\bsa,\bsb}$ are one-periodic in each coordinate and we have $$f(\bsx)=\sum_{\bsh \in \ZZ^d} \widehat{f}(\bsh) \exp(2 \pi \icomp \bsh \cdot \bsx) \quad \mbox{ for all $\bsx \in [0,1]^d$,}$$ where $\widehat{f}(\bsh)=\int_{[0,1]^d} f(\bsx) \exp(-2 \pi \icomp \bsh \cdot \bsx) \rd \bsx$ is the $\bsh^{{\rm th}}$ Fourier coefficient of $f$, and the norm of $f \in  \cH_{d,\bsa,\bsb}$ is $$\|f\|_{d,\bsa,\bsb} =\left(\sum_{\bsh \in \ZZ^d} \omega^{- \sum_{j=1}^d a_j |h_j|^{b_j}} |\widehat{f}(\bsh)|^2 \right)^{1/2} < \infty.$$

It is well known that functions $f \in \cH_{d,\bsa,\bsb}$ are infinitely many times differentiable and even analytic (see \cite[Section~10]{DKPW14}).

\paragraph{Numerical integration.} We study numerical integration of functions from $\cH_{d,\bsa,\bsb}$. It is well known (see, e.g.,  \cite{TWW88}) that we can restrict ourselves to approximating $\int_{[0,1]^d} f(\bsx) \rd \bsx$ by means of {\it linear algorithms} $Q_{N,d}$ of the form
\[Q_{N,d}(f):=\sum_{k=0}^{N-1} w_k f(\bsx_k),\]
with coefficients $\bsw=(w_1,\ldots,w_d) \in \CC^d$ and sample points $\cP=\{\bsx_0,\bsx_1,\ldots,\bsx_{N-1}\}$  in $[0,1)^d$. We are interested in studying the {\it worst-case integration error},
$$
e(\cH_{d,\bsa,\bsb},\cP,\bsw)=\sup_{f \in \cH_{d,\bsa,\bsb} \atop
\|f\|_{d,\bsa,\bsb} \le 1} \left|\int_{[0,1]^d} f(\bsx) \rd \bsx-Q_{N,d}(f)\right|.$$ 

Let $e(N,d)$ be the {\it $N^{{\rm th}}$ minimal worst-case error},
$$
e(N,d)=\inf_{\cP, \bsw}\ e(\cH_{d,\bsa,\bsb},\cP,\bsw),
$$
where the infimum is extended over all $N$-element point sets $\cP$ in $[0,1)^d$ and over all weights $\bsw$ in $\CC^N$.
For $N=0$, the best we can do is to approximate the integral simply by zero, and the so-called {\it initial error} $e(0,d)$, which is the norm of the integral operator on $\cH_{d,\bsa,\bsb}$, equals one in the present case. Hence, the integration problem is normalized for all $d$. 

\paragraph{Exponential convergence and tractability.}

Since the integrands from $\cH_{d,\bsa,\bsb}$ are analytic, one may expect that optimal algorithms achieve exponential convergence rates for the respective worst-case errors. This is indeed the case: From \cite{DKPW14,KPW14a} we know that there exist numbers $q\in(0,1)$ and functions $p,C,M: \NN \rightarrow (0,\infty)$ such that 
\begin{equation}\label{exrate}
e(N,d) \le C(d)\,q^{(N/M(d))^{p(d)}}\ \ \ \ \mbox{for all}\ \ d,N\in\NN.
\end{equation}
We say that we have {\it exponential convergence} of $e(N,d)$ in $\cH_{d,\bsa,\bsb}$. Moreover, if the weight sequence $\bsb=(b_j)_{j \ge 1}$ tends to infinity so fast that $\sum_{j=1}^{\infty} b_j^{-1} < \infty$, then we even have {\it uniform exponential convergence} which means that we can take $p(d)=p>0$ for all $d \in \NN$ in \eqref{exrate}.

Besides the convergence rate in $N$ also the dependence of the worst-case error on the dimension $d$ is important, in particular for very high-dimensional problems. This is the subject of tractability (see \cite{NW1,NW2,NW3}). Here one studies the so-called {\it information complexity} of the problem, which is defined as $$N(\varepsilon,d)=\min\{N \in \NN \ : \ e(N,d) \le \varepsilon \, e(0,d)\} \quad \mbox{ for $\varepsilon \in (0,1)$ and $d \in \NN$.}$$

Several notions of tractability are studied which classify the growth of the information complexity in terms of $\varepsilon^{-1}$ and $d$. We call these the {\it algebraic (ALG) notions of tractability}. In the context of exponential convergence rates the notions of {\it EXP-tractability} are of particular importance. Here one is interested in the growth of $N(\varepsilon,d)$ in terms of $\log \varepsilon^{-1}$ and $d$ (see \cite{DKPW14,DLPW11,KPW14a,KPW14b}).

In the present note we only deal with EXP-weak tractability which rules out the cases for which $N(\varepsilon,d)$ depends exponentially on $d$ or $\log \varepsilon^{-1}$. We say that we have {\it EXP-weak tractability} (abbreviated {\it EXP-WT}) if $$\lim_{d+\varepsilon^{-1} \rightarrow \infty} \frac{\log N(\varepsilon,d)}{d+\log \varepsilon^{-1}} =0.$$

It is well known that the integration problem in $\cH_{d,\bsa,\bsb}$ is EXP-WT if and only if $\lim_{j \rightarrow \infty} a_j=\infty$ (see \cite[Section~9]{DKPW14}). The proof of this result is non-constructive. This is in contrast to other notions of EXP-tractability like, e.g., EXP-strong polynomial tractability, which can be achieved under certain conditions on the weights $\bsa$ and $\bsb$ by means of a simple algorithm based on regular grids. In order to achieve EXP-WT when $\lim_{j \rightarrow \infty}a_j=\infty$ no such algorithm has been known so far. 

\section{The result} 

Provided $\lim_{j \rightarrow \infty}a_j=\infty$,  we show that EC-WT can be achieved by means of {\it lattice rules} (see \cite{niesiam,SJ94}) of the form
\begin{equation}\label{def:LR}
Q_{N,d}(f)=\frac{1}{N} \sum_{k=0}^{N-1} f\left(\left\{\frac{k}{N} \bsg\right\}\right), \quad \mbox{with a suitable $\bsg \in \ZZ^d$.}
\end{equation}
Hence the underlying node set $\cP=\cP(\bsg,N)$ consists of the points $\bsx_k=\left\{\frac{k}{N} \bsg\right\}$ for $k \in G_N:=\{0,1,\ldots,N-1\}$, where the fractional-part function $\{x\}=x-\lfloor x \rfloor$ is applied component-wise, and the coefficients are $\bsw =\bsw_{\text{QMC}}=(N^{-1},\ldots,N^{-1})$. The vector $\bsg$ is usually called the {\it generating vector} of the lattice rule. Special types of lattice rules are {\it Korobov rules} where the generating vector $\bsg$ is of the form $$\bsg=\bsv_d(g):=(1,g,g^2,\ldots,g^{d-1}),\quad \mbox{with a suitable $g \in G_N$.}$$ Generating vectors of such particular form are called {\it Korobov vectors}. 

Since we always deal with coefficients $\bsw_{\text{QMC}}$ we will denote the worst-case error of a lattice rule \eqref{def:LR} simply by $e(\cH_{d,\bsa,\bsb},\cP(\bsg,N))$ from now on.
\begin{lemma}\label{le1}
The squared worst-case error of a lattice rule in $\cH_{d,\bsa,\bsb}$ is given by $$e^2(\cH_{d,\bsa,\bsb},\cP(\bsg,N)) =  \sum_{\bsh \in \ZZ^d \setminus\{\bszero\} \atop \bsh \cdot \bsg \equiv 0 \pmod{N}} \omega^{\sum_{j=1}^d a_j |h_j|^{b_j}}.$$
\end{lemma}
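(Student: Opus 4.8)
The plan is to compute the worst-case integration error of a lattice rule by exploiting the reproducing kernel structure of $\cH_{d,\bsa,\bsb}$. The standard approach for quasi-Monte Carlo rules in a reproducing kernel Hilbert space expresses the squared worst-case error via the \emph{representer} of the integration-error functional, and then evaluates this using the explicit form of the kernel $K_{d,\bsa,\bsb}$ given in \eqref{def:kerkorgen}.

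First I would recall the general formula for the squared worst-case error of a QMC rule with node set $\cP=\{\bsx_0,\ldots,\bsx_{N-1}\}$ and equal weights $N^{-1}$ in an RKHS with reproducing kernel $K$. By the reproducing property, the integration error functional has a representer whose norm squared is
\begin{equation*}
\int_{[0,1]^d}\!\int_{[0,1]^d} K(\bsx,\bsy)\rd\bsx\rd\bsy
-\frac{2}{N}\sum_{k=0}^{N-1}\int_{[0,1]^d}K(\bsx_k,\bsy)\rd\bsy
+\frac{1}{N^2}\sum_{k=0}^{N-1}\sum_{\ell=0}^{N-1}K(\bsx_k,\bsx_\ell).
\end{equation*}
Next I would substitute the Fourier expansion \eqref{def:kerkorgen} for $K_{d,\bsa,\bsb}$ into each of the three terms. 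The key computation uses the orthogonality relation $\int_{[0,1]^d}\exp(2\pi\icomp\bsh\cdot\bsx)\rd\bsx=1$ if $\bsh=\bszero$ and $0$ otherwise. The first (double integral) term therefore collapses to the single summand $\rho_{\bsa,\bsb}(\bszero)=1$, and each integral against $K$ in the second term likewise selects only the $\bsh=\bszero$ term, contributing $1$; summed and scaled this gives $-2$. The essential part is the third term: plugging in the node points $\bsx_k=\{\tfrac{k}{N}\bsg\}$ and using periodicity, the double sum becomes
\begin{equation*}
\frac{1}{N^2}\sum_{\bsh\in\ZZ^d}\rho_{\bsa,\bsb}(\bsh)
\left(\sum_{k=0}^{N-1}\exp\!\left(2\pi\icomp\,\tfrac{k}{N}\,\bsh\cdot\bsg\right)\right)
\left(\sum_{\ell=0}^{N-1}\exp\!\left(-2\pi\icomp\,\tfrac{\ell}{N}\,\bsh\cdot\bsg\right)\right).
\end{equation*}

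The crux is evaluating the inner character sum. I would invoke the standard geometric-sum identity
\begin{equation*}
\frac{1}{N}\sum_{k=0}^{N-1}\exp\!\left(2\pi\icomp\,\tfrac{k}{N}\,m\right)
=\begin{cases}1 & \text{if } m\equiv 0 \pmod N,\\ 0 & \text{otherwise,}\end{cases}
\end{equation*}
applied with $m=\bsh\cdot\bsg$. This forces the double sum to reduce to $\sum_{\bsh\cdot\bsg\equiv 0\,(N)}\rho_{\bsa,\bsb}(\bsh)$, where the $\bsh=\bszero$ term contributes $1$. Combining all three terms, the standalone $+1$, the $-2$, and the $+1$ from the $\bsh=\bszero$ part of the third term cancel to leave exactly the sum over nonzero $\bsh$ with $\bsh\cdot\bsg\equiv 0 \pmod N$, which is the claimed expression with $\rho_{\bsa,\bsb}(\bsh)=\omega^{\sum_{j=1}^d a_j|h_j|^{b_j}}$.

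I do not expect a serious obstacle here, since this is the classical lattice-rule error formula adapted to the present kernel. The only point requiring mild care is justifying the interchange of the (infinite) sum over $\bsh$ with the integration and with the finite sums over $k,\ell$; this is licensed by absolute convergence of the Fourier series, which holds because $\omega\in(0,1)$ and $a_j,b_j>0$ force $\rho_{\bsa,\bsb}(\bsh)$ to decay super-exponentially in $|\bsh|$, so $\sum_{\bsh}\rho_{\bsa,\bsb}(\bsh)<\infty$. Once the interchange is granted, the remaining steps are purely the orthogonality and geometric-sum evaluations described above.
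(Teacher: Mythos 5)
Your proposal is correct and follows essentially the same route as the paper: both rest on the standard RKHS worst-case error formula, the observation that integrating the kernel picks out only the $\bsh=\bszero$ term (the paper states this as $\int_{[0,1]^d}K_{d,\bsa,\bsb}(\bsx,\bsy)\rd\bsy=1$, so the first two of your three terms collapse to $-1$ immediately), and the geometric character-sum identity that reduces the double node sum to the sum over $\bsh$ with $\bsh\cdot\bsg\equiv 0 \pmod N$. The only difference is presentational: you expand all three terms explicitly, while the paper shortcuts the first two.
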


\begin{proof}
Using the worst-case error formula for reproducing kernel Hilbert spaces (see, for example, \cite[Theorem~3.5]{DKS13} or  \cite[Proposition~2.11]{DP10}) and the fact that $\int_{[0,1]^d} K_{d,\bsa,\bsb}(\bsx,\bsy) \rd \bsy=1$ we have
$$e^2(\cH_{d,\bsa,\bsb},\cP(\bsg,N)) = -1+\frac{1}{N^2}\sum_{k,l=0}^{N-1} K_{d,\bsa,\bsb}(\bsx_k,\bsx_l).$$
Inserting the definition of the kernel \eqref{def:kerkorgen} and interchanging the order of summation we obtain 
\begin{eqnarray*}
e^2(\cH_{d,\bsa,\bsb},\cP(\bsg,N)) & = & -1+ \sum_{\bsh \in \ZZ^d} \omega^{\sum_{j=1}^d a_j |h_j|^{b_j}}  \left(\frac{1}{N^2} \sum_{k,l=0}^{N-1} \exp(2 \pi \icomp (k-l) (\bsh \cdot \bsg)/N)) \right) \\
& = & \sum_{\bsh \in \ZZ^d\setminus\{\bszero\}} \omega^{\sum_{j=1}^d a_j |h_j|^{b_j}}  \left|\frac{1}{N} \sum_{k=0}^{N-1} \exp(2 \pi \icomp k (\bsh \cdot \bsg)/N)) \right|^2.
\end{eqnarray*}
Now the result follows because the inner exponential sum equals one if and only if $\bsh \cdot \bsg \equiv 0 \pmod{N}$ and zero otherwise.
\end{proof}

The following proposition is the key result in order to achieve EXP-WT by means of lattice rules.

\begin{proposition}\label{le2}
Let $N$ be a prime number and let $d$ be a positive integer. For $\lambda \in (0,1]$ define $A_\lambda:=\sum_{h=1}^\infty \omega^{\lambda a_\ast (h^{b_\ast}-1)} < \infty.$ Consider general generating vectors and Korobov generating vectors, respectively. The following two assertions hold:
\begin{enumerate}
\item Let $\bsg_\ast \in G_N^d$ be such that $e(\cH_{d,\bsa,\bsb},\cP(\bsg_\ast,N)) = \min_{\bsg \in G_N^d} e(\cH_{d,\bsa,\bsb},\cP(\bsg,N))$. Then for all $\lambda \in (0,1]$ we have $$e(\cH_{d,\bsa,\bsb},\cP(\bsg_\ast,N)) \le \left(\frac{1}{N} \prod_{j=1}^d (1+2 A_\lambda \omega^{\lambda a_j}) \right)^{1/(2 \lambda)}.$$
\item Let $g_\ast \in G_N$ be such that $$e(\cH_{d,\bsa,\bsb},\cP(\bsv_d(g_\ast),N)) = \min_{g \in G_N} e(\cH_{d,\bsa,\bsb},\cP(\bsv_d(g),N)).$$ Then for all $\lambda \in (0,1]$ we have $$e(\cH_{d,\bsa,\bsb},\cP(\bsv_d(g_\ast),N)) \le \left(\frac{d-1}{N} \prod_{j=1}^d (1+2 A_\lambda \omega^{\lambda a_j}) \right)^{1/(2 \lambda)}.$$
\end{enumerate}
\end{proposition}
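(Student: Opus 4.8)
Here is my plan for proving Proposition~\ref{le2}.

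The plan is to bound the error of the optimal rule by an average over all admissible generating vectors, after first using concavity to linearise the exponent. Since $\lambda \in (0,1]$, the map $t \mapsto t^\lambda$ is subadditive on sums of nonnegative terms, so raising the formula of Lemma~\ref{le1} to the power $\lambda$ gives
\[
e^{2\lambda}(\cH_{d,\bsa,\bsb},\cP(\bsg,N)) \le \sum_{\bsh \in \ZZ^d\setminus\{\bszero\} \atop \bsh\cdot\bsg \equiv 0 \pmod{N}} \omega^{\lambda \sum_{j=1}^d a_j |h_j|^{b_j}} .
\]
Because the minimiser cannot exceed the mean, I would bound $\min_{\bsg} e^{2\lambda}$ by the average of the right-hand side over the relevant family of generating vectors, and then interchange the order of summation, so that the combinatorics reduces to counting, for each fixed $\bsh$, the number of generating vectors for which $\bsh$ lies in the dual.

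Here primality of $N$ does the work. For general $\bsg \in G_N^d$ the condition $\bsh\cdot\bsg\equiv 0 \pmod{N}$ is a single nonzero linear equation, hence has $N^{d-1}$ solutions whenever $\bsh\not\equiv\bszero\pmod{N}$, producing the gain factor $1/N$. For Korobov vectors $\bsv_d(g)$ the condition reads $\sum_{j=1}^d h_j g^{j-1}\equiv 0 \pmod{N}$; this is a polynomial of degree at most $d-1$ in $g$ which, provided its coefficients do not all vanish modulo $N$, has at most $d-1$ roots in the field $\ZZ/N\ZZ$, producing the factor $(d-1)/N$. In both cases the averaged bound collapses to a sum of the shape $\frac{c}{N}\sum_{\bsh\in\ZZ^d}\omega^{\lambda\sum_j a_j|h_j|^{b_j}}$ (with $c=1$ resp.\ $c=d-1$), which factorises over the coordinates into $\frac{c}{N}\prod_{j=1}^d\bigl(1+2\sum_{h\ge 1}\omega^{\lambda a_j h^{b_j}}\bigr)$. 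It then remains to bound each one-dimensional factor by $1+2A_\lambda\omega^{\lambda a_j}$: factoring out $\omega^{\lambda a_j}$ and comparing term by term with $A_\lambda$, I would use that $a_j\ge a_\ast$ and $b_j\ge b_\ast$ imply $a_j(h^{b_j}-1)\ge a_\ast(h^{b_\ast}-1)\ge 0$ for every $h\ge 1$, whence $\omega^{\lambda a_j(h^{b_j}-1)}\le\omega^{\lambda a_\ast(h^{b_\ast}-1)}$ and the $j$-th factor is at most $1+2A_\lambda\omega^{\lambda a_j}$, which assembles into the claimed products.

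The step I expect to be the real obstacle is the treatment of the ``diagonal'' frequencies $\bsh\ne\bszero$ with $\bsh\equiv\bszero\pmod{N}$, i.e.\ all $h_j$ divisible by $N$. Such $\bsh$ satisfy $\bsh\cdot\bsg\equiv 0\pmod{N}$ for \emph{every} $\bsg$, so the counting step assigns them the full multiplicity $N^d$ (resp.\ $N$) rather than the reduced $N^{d-1}$ (resp.\ $d-1$); they are therefore not damped by the gain factor and must be absorbed by hand. To control them I would isolate their contribution, write $\bsh=N\bsm$, and establish the grouping inequality $N\sum_{h\ge 1,\,N\mid h}\omega^{\lambda a_j h^{b_j}}\le A_\lambda\,\omega^{\lambda a_j}$, obtained by partitioning $\{1,2,\dots\}$ into consecutive blocks of length $N$ and noting that each block of the series defining $A_\lambda$ dominates $N$ copies of the corresponding diagonal term. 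Making this absorption precise, so that the diagonal mass is swallowed by the slack already present in the product $\prod_{j=1}^d(1+2A_\lambda\omega^{\lambda a_j})$ (the margin here is tight, relying on the exclusion of $\bsh=\bszero$), is the delicate part of the argument.
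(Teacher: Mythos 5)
Your plan follows the same route as the paper's proof: Jensen's inequality with exponent $\lambda$ applied to the error formula of Lemma~\ref{le1}, averaging over the family of generating vectors, counting for each fixed $\bsh$ how many vectors put it into the dual, and bounding each one-dimensional factor by $1+2A_\lambda\omega^{\lambda a_j}$ exactly as you describe, via $a_j(h^{b_j}-1)\ge a_\ast(h^{b_\ast}-1)\ge 0$. Up to that point everything you write is correct and coincides with the printed argument.

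The divergence is your treatment of the ``diagonal'' frequencies $\bsh\ne\bszero$ with $N\mid h_j$ for all $j$, and you are right to flag them: the paper simply asserts that for every nonzero $\bsh$ the congruence $\bsh\cdot\bsg\equiv 0\pmod N$ has exactly $N^{d-1}$ solutions (resp.\ at most $d-1$ in the Korobov case), which is false for these $\bsh$ --- they are satisfied by all $N^d$ vectors $\bsg$ (resp.\ all $N$ values of $g$), so the printed proof glosses over exactly the point you identify. Your grouping inequality is the right repair: since $h\mapsto\omega^{\lambda a_jh^{b_j}}$ is nonincreasing, $N\sum_{m\ge1}\omega^{\lambda a_j(Nm)^{b_j}}\le\sum_{h\ge1}\omega^{\lambda a_jh^{b_j}}\le A_\lambda\omega^{\lambda a_j}$, and expanding $\prod_{j=1}^d\bigl(1+2\sum_{m\ge 1}\omega^{\lambda a_j(Nm)^{b_j}}\bigr)-1$ over nonempty coordinate subsets $u$ produces a factor $N^{-|u|}\le N^{-1}$ in each term, so the total diagonal mass is at most $\frac1N\prod_{j=1}^d(1+2A_\lambda\omega^{\lambda a_j})$. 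What does \emph{not} work is your hope that this mass is ``swallowed by the slack already present in the product'': when all $a_j=a_\ast$ and $b_j=b_\ast$ there is no slack whatsoever (the one-dimensional bound is then an identity), yet the diagonal contribution is strictly positive, so the averaging argument can only deliver $\bigl(\frac2N\prod_{j=1}^d(1+2A_\lambda\omega^{\lambda a_j})\bigr)^{1/(2\lambda)}$ in part 1 and $\bigl(\frac dN\prod_{j=1}^d(1+2A_\lambda\omega^{\lambda a_j})\bigr)^{1/(2\lambda)}$ in part 2, not the constants stated in the proposition. This loss is completely harmless downstream (it only changes the constant $4$ in \eqref{bd:infcomp} to an $8$ and leaves all tractability conclusions intact), but you should state the weaker constants rather than promise an exact absorption that fails in the tight case.
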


\begin{proof}
For every $j \in \NN$ we have
\begin{eqnarray*}
\sum_{h =-\infty}^{\infty} \omega^{\lambda a_j |h|^{b_j}} = 1+2 \sum_{h=1}^\infty \omega^{\lambda a_j |h|^{b_j}}
= 1+2 \omega^{\lambda a_j} \sum_{h=1}^\infty \omega^{\lambda a_j (|h|^{b_j}-1)} \le 1+2 A_\lambda  \omega^{\lambda a_j}
\end{eqnarray*}
and hence 
\begin{eqnarray}\label{bd:sum}
\sum_{\bsh \in \ZZ^d \setminus\{\bszero\}} \omega^{\sum_{j=1}^d \lambda a_j |h_j|^{b_j}}  \le \prod_{j=1}^d \left(\sum_{h =-\infty}^{\infty} \omega^{\lambda a_j |h|^{b_j}}\right)\le \prod_{j=1}^d \left(1+2 A_\lambda  \omega^{\lambda a_j}\right).
\end{eqnarray}

Using Lemma~\ref{le1} and Jensen's inequality, which states that for any $\lambda \in (0,1]$ and non-negative reals $a_k$ it holds that $(\sum_k a_k)^\lambda \le \sum_k a_k^\lambda$ (see, e.g., \cite[pp. 100-101]{LP14}), for $\lambda \in (0,1]$ we have $$e^{2 \lambda}(\cH_{d,\bsa,\bsb},\cP(\bsg,N)) =\left( \sum_{\bsh \in \ZZ^d \setminus\{\bszero\} \atop \bsh \cdot \bsg \equiv 0 \pmod{N}} \omega^{\sum_{j=1}^d a_j |h_j|^{b_j}} \right)^{\lambda} \le \sum_{\bsh \in \ZZ^d \setminus\{\bszero\} \atop \bsh \cdot \bsg \equiv 0 \pmod{N}} \omega^{\sum_{j=1}^d \lambda a_j |h_j|^{b_j}}.$$

\begin{enumerate}
\item First we treat the case of general generating vectors. Averaging over $\bsg \in G_N^d$ and interchanging the order of summation yields 
$$
\frac{1}{N^d} \sum_{\bsg \in G_N^d} e^{2 \lambda}(\cH_{d,\bsa,\bsb},\cP(\bsg,N))  \le  
\sum_{\bsh \in \ZZ^d \setminus\{\bszero\}} \omega^{\sum_{j=1}^d \lambda a_j |h_j|^{b_j}} \left( \frac{1}{N^d} \sum_{\bsg \in G_N^d \atop \bsh \cdot \bsg \equiv 0 \pmod{N} } 1\right).
$$
Since $\bsh \not=\bszero$ and $N$ is a prime number we find that the congruence $\bsh \cdot \bsg \equiv 0 \pmod{N}$ has exactly $N^{d-1}$ solutions in $G_N^d$. Hence 
\begin{equation*}
\frac{1}{N^d} \sum_{\bsg \in G_N^d} e^{2 \lambda}(\cH_{d,\bsa,\bsb},\cP(\bsg,N))  \le \frac{1}{N}\sum_{\bsh \in \ZZ^d \setminus\{\bszero\}} \omega^{\sum_{j=1}^d \lambda a_j |h_j|^{b_j}} \le \frac{1}{N} \prod_{j=1}^d \left(1+2 A_\lambda  \omega^{\lambda a_j}\right),
\end{equation*}
where we applied \eqref{bd:sum}. In particular, there exists a $\overline{\bsg} \in G_N^d$ such that  $$e(\cH_{d,\bsa,\bsb},\cP(\overline{\bsg},N))  \le \left(\frac{1}{N}\prod_{j=1}^d \left(1+2 A_\lambda  \omega^{\lambda a_j}\right)\right)^{1/(2 \lambda)}.$$
Obviously $e(\cH_{d,\bsa,\bsb},\cP(\bsg_\ast,N)) \le e(\cH_{d,\bsa,\bsb},\cP(\overline{\bsg},N))$, and this proves the result.
\item In the case of Korobov rules we proceed in the same way as above. Averaging  over $g \in G_N=\{0,1,\ldots,N-1\}$ and interchanging the order of summation we obtain 
$$
\frac{1}{N} \sum_{g=0}^{N-1} e^{2 \lambda}(\cH_{d,\bsa,\bsb},\cP(\bsv_d(g),N))  \le  
\sum_{\bsh \in \ZZ^d \setminus\{\bszero\}} \omega^{\sum_{j=1}^d \lambda a_j |h_j|^{b_j}} \left(\frac{1}{N} \sum_{g =0 \atop \bsh \cdot \bsv_d(g)\equiv 0 \pmod{N}}^{N-1}  1\right).
$$
The congruence $\bsh \cdot \bsv_d(g)\equiv 0 \pmod{N}$ is of polynomial form $h_1 +h_2 g+h_3 g^2+\cdots +h_d g^{d-1}\equiv 0 \pmod{N}$ and has at most $d-1$ solutions in $G_N$ for nonzero $\bsh=(h_1,\ldots,h_d) \in \ZZ^d$. Hence, 
\begin{equation*}
\frac{1}{N} \sum_{g=0}^{N-1} e^{2 \lambda}(\cH_{d,\bsa,\bsb},\cP(\bsv_d(g),N))  \le  \frac{d-1}{N}
\prod_{j=1}^d \left(1+2 A_\lambda  \omega^{\lambda a_j}\right) .
\end{equation*}
From here the result follows in the same way as above.
\end{enumerate}
\end{proof}

\paragraph{EXP-WT by means of lattice rules.} Now assume that $\lim_{j \rightarrow \infty} a_j=\infty$.

Let $\varepsilon \in (0,1)$ and $d \in \NN$. Let in the following $c_d=1$ in case of general lattice rules and $c_d=d$ in case of Korobov rules. For $\lambda \in (0,1]$ let $N$ be the smallest prime number greater than or equal to $$\left\lceil c_d \,\varepsilon^{-2 \lambda} \prod_{j=1}^d \left(1+2 A_\lambda \omega^{\lambda a_j}\right)\right\rceil =:M_\lambda(\varepsilon,d).$$ 
Note that according to Bertrand's postulate we have $N \in [M_\lambda(\varepsilon,d),2 M_\lambda(\varepsilon,d))$.

According to Proposition~\ref{le2} there exists a lattice rule (with $\bsg_\ast \in G_N^d$) or even a Korobov rule (with $g_\ast \in G_N$) such that $e(\cH_{d,\bsa,\bsb},\cP(\bsg,N)) \le \varepsilon$, where $\bsg=\bsg_\ast$ or $\bsg=\bsv_d(g_\ast)$, respectively. Hence $$N(\varepsilon,d)\le N \le 2M \le 4 c_d \varepsilon^{-2 \lambda} \prod_{j=1}^d \left(1+2 A_\lambda \omega^{\lambda a_j}\right).$$ This means that for every $\lambda \in (0,1]$ we have 
\begin{equation}\label{bd:infcomp}
N(\varepsilon,d)\le 4 c_d \varepsilon^{-2 \lambda} \prod_{j=1}^d \left(1+2 A_\lambda \omega^{\lambda a_j}\right).
\end{equation}
Consequently,
\begin{eqnarray*}
\frac{\log N(\varepsilon,d)}{d+\log \varepsilon^{-1}} & \le & \frac{\log 4 +\log c_d+ 2 \lambda \log \varepsilon^{-1} +\sum_{j=1}^d \log (1+2 A_\lambda \omega^{\lambda a_j})}{d+\log \varepsilon^{-1}}\\
& \le &  \frac{\log 4}{d+\log \varepsilon^{-1}}+ \frac{\log d}{d+\log \varepsilon^{-1}}+ \frac{2 \lambda \log \varepsilon^{-1}}{d+\log \varepsilon^{-1}} +\frac{2 A_\lambda\sum_{j=1}^d \omega^{\lambda a_j}}{d+\log \varepsilon^{-1}}.
\end{eqnarray*}
Note that $\lim_{j \rightarrow \infty} a_j=\infty$ implies that $\lim_{j \rightarrow \infty} \omega^{\lambda a_j}=0$, and hence $$\lim_{d \rightarrow \infty}\frac{1}{d}\sum_{j=1}^d \omega^{\lambda a_j}=0.$$ This implies that $$\limsup_{d+\varepsilon^{-1} \rightarrow \infty} \frac{\log N(\varepsilon,d)}{d+\log \varepsilon^{-1}} \le 2 \lambda.$$ Since $\lambda \in (0,1]$ can be arbitrarily close to zero we obtain $$\lim_{d+\varepsilon^{-1} \rightarrow \infty} \frac{\log N(\varepsilon,d)}{d+\log \varepsilon^{-1}}=0$$ and this proves EXP-WT. \hfill$\qed$

\begin{remark}\rm
It is well known (see \cite{KPW14b}) that EXP-WT implies that the $N^{{\rm th}}$ minimal worst-case error tends to zero faster than any power of $N^{-1}$, i.e., $\lim_{N \rightarrow \infty} N^{\alpha} e(N,d)=0$ for all $\alpha>0$. This fact in conjunction with our result shows that such convergence rates can be achieved with (Korobov) lattice rules. Lattice rules can  even yield exponential convergence rates, see \cite{DLPW11}. However, whether lattice rules are strong enough to achieve also uniform exponential convergence or more demanding notions of tractability like, for example, EXP-strong polynomial tractablity, remains to be examined.
\end{remark}

\begin{remark}\rm
There is the more general notion of EXP-$(s,t)$-WT for reals $s,t>0$ (see, e.g., \cite[Sec.~5.2]{IKPW16b}), which means that $$\lim_{d+\varepsilon^{-1} \rightarrow \infty} \frac{\log N(\varepsilon,d)}{d^s+(\log \varepsilon^{-1})^t} =0.$$ In order to extend the above considerations to the more general setting we obviously have to assume that $t \ge 1$. Then lattice rules can achieve EXP-$(s,t)$-WT for the following cases:
\begin{itemize}
\item $s>1$ and $t \ge 1$ (without any further condition on the weights beyond \eqref{cond:weightab});
\item $s=1$ and $t \ge 1$ and $\lim_{j \rightarrow \infty} a_j =\infty$.
\item $s<1$ and $t \ge 1$ and $\sum_{j=1}^d \omega^{\lambda a_j} =o(d^s)$ for all $\lambda>0$. For example, this condition on $(a_j)_{j \ge 1}$ is satisfied for any $s>0$ if $\lim_{j \rightarrow \infty} a_j/\log j = \infty$.
\end{itemize}
\end{remark}

We close with some comments on the algebraic notions of tractability.

\begin{remark}\rm
From \eqref{bd:infcomp} one can also deduce ``constructive" proofs of the results on the algebraic notions ALG-strong polynomial, ALG-polynomial and ALG-weak tractability in \cite[Theorem~4.2]{KPW14b}. For example, the estimate \eqref{bd:infcomp} in case of lattice rules implies 
\begin{equation}\label{bd:Nstd}
N(\varepsilon,d)\le 4  \varepsilon^{-2 \lambda} \exp\left(2 A_\lambda \sum_{j=1}^d \omega^{\lambda a_j}\right) \quad \forall \lambda \in (0,1].
\end{equation}
 It is easy to see (or check \cite[Proof of Theorem~5.2]{KPW14b}) that $\sum_{j=1}^\infty \omega^{\lambda a_j} < \infty$ if and only if $A:=\lim_{j \rightarrow \infty} \frac{a_j}{\log j}$ satisfies $A>\frac{1}{\lambda \log \omega^{-1}}$. This shows that the condition $A>\frac{1}{\lambda \log \omega^{-1}}$ implies ALG-strong polynomial tractability with $\varepsilon$-exponent $2\lambda$. This means that the optimal $\varepsilon$-exponent of ALG-strong polynomial tractability is at most $$ 2 \inf\left\{\lambda >0 \ : \ A > \frac{1}{\lambda \log \omega^{-1}}\right\}= \frac{2}{A \log \omega^{-1}},$$ which is a slight improvement of the corresponding result in \cite[Theorem~4.2]{KPW14b}. We remark that \cite[Theorem~4.2]{KPW14b} also tells us that the $\varepsilon$-exponent does not exceed the value 2. 

Furthermore, \eqref{bd:Nstd} with $\lambda=1$ implies that for $\tau,\sigma>0$ we have $$\limsup_{d+\varepsilon^{-1} \rightarrow \infty} \frac{\log N(\varepsilon,d)}{d^\tau + \varepsilon^{-\sigma}} \le 2 A_1 \limsup_{d \rightarrow \infty} \frac{1}{d^\tau} \sum_{j=1}^d \omega^{a_j}.$$ This means that we have ALG-uniform weak tractability (cf. \cite{S13}) if $\sum_{j=1}^d \omega^{a_j}=o(d^\tau)$ for all $\tau>0$, which in turn is equivalent to $\sum_{j=1}^d \omega^{a_j}=O_t(d^t)$ for all $t >0$ (the index $t$ in the $O$-notation indicates that the implied factor depends on $t$). If $\sum_{j=1}^d \omega^{a_j}=O((\log d)^t)$ for some $t>1$, then \eqref{bd:Nstd} implies ALG-quasi-polynomial tractability (cf. \cite{GW11}). If, however, $\sum_{j=1}^d \omega^{a_j}=O(\log d)$, which is certainly satisfied if $\frac{a_j}{\log j} \ge \frac{1}{\log \omega^{-1}}$ for sufficiently large $j$, then we even have ALG-polynomial tractability. The latter is the already known sufficient condition from \cite[Theorem~4.2]{KPW14b}. We see that the sufficient conditions for ALG-uniform weak, ALG-quasi-polynomial and ALG-polynomial tractability are very tight.  For example, assume that we have $\sum_{j=1}^d \omega^{a_j} \le d^t/\sqrt{t}$ for all $t>0$, which implies ALG-uniform weak tractability. Then, with $t=1/n$, $n \in \NN$, we obtain $$\left(\sum_{j=1}^d \omega^{a_j}\right)^n \le n^{n/2} d.$$ Dividing by $n!$ and summing up yields $$\exp\left(\sum_{j=1}^d \omega^{a_j}\right) \le C\, d$$ with $C:=1+\sum_{n=1}^{\infty} n^{n/2}/n! < \infty$ according to the root test. This implies $\sum_{j=1}^d \omega^{a_j} \le \log d + \log C$, and hence we even have ALG-polynomial tractability. 
\end{remark}

\paragraph{Acknowledgment.} The author is grateful to Peter Kritzer and an anonymous referee for valuable comments and remarks.

\noindent\textbf{Author's address:}\\
\noindent Institut f\"ur Finanzmathematik und Angewandte Zahlentheorie, Johannes Kepler Universit\"at Linz, Altenbergerstra{\ss}e 69, 4040 Linz, Austria,
email: friedrich.pillichshammer(AT)jku.at

\end{document}